\DeclareMathOperator{\CM}{\mathrm{CM}}
\DeclareMathOperator{\norm}{\mathrm{norm}}
\DeclareMathOperator{\Norm}{\mathrm{Norm}}
\DeclareMathOperator{\rank}{\mathrm{rank}}
\DeclareMathOperator{\Tor}{\mathrm{Tor}}
\begin{document}
 \bibliographystyle{plain}

 \newtheorem{theorem}{Theorem}[section]
 \newtheorem{lemma}{Lemma}[section]
 \newtheorem{corollary}{Corollary}[section]
 \newtheorem{conjecture}{Conjecture}
 \newtheorem{definition}{Definition}
 
 \newcommand{\mc}{\mathcal}
 \newcommand{\A}{\mc A}
 \newcommand{\B}{\mc B}
 \newcommand{\cc}{\mc C}
 \newcommand{\D}{\mc D}
 \newcommand{\E}{\mc E}
 \newcommand{\F}{\mc F}
 \newcommand{\G}{\mc G}
 \newcommand{\hH}{\mc H}
 \newcommand{\I}{\mc I}
 \newcommand{\J}{\mc J}
 \newcommand{\K}{\mc K}
 \newcommand{\eL}{\mc L}
 \newcommand{\M}{\mc M}
 \newcommand{\eN}{\mc N}
 \newcommand{\pp}{\mc P}
 \newcommand{\qq}{\mc Q}
 \newcommand{\U}{\mc U}
 \newcommand{\V}{\mc V}
 \newcommand{\W}{\mc W}
 \newcommand{\X}{\mc X}
 \newcommand{\Y}{\mc Y}
 \newcommand{\zZ}{\mc Z}
 \newcommand{\C}{\mathbb{C}}
 \newcommand{\R}{\mathbb{R}}
 \newcommand{\Q}{\mathbb{Q}}
 \newcommand{\T}{\mathbb{T}}
 \newcommand{\Z}{\mathbb{Z}}
 \newcommand{\aA}{\mathfrak A}
 \newcommand{\bB}{\mathfrak B}
 \newcommand{\cC}{\mathfrak C}
 \newcommand{\dD}{\mathfrak D}
 \newcommand{\ee}{\mathfrak E}
 \newcommand{\ff}{\mathfrak F}
 \newcommand{\iI}{\mathfrak I}
 \newcommand{\mM}{\mathfrak M}
 \newcommand{\nN}{\mathfrak N}
 \newcommand{\pP}{\mathfrak P}
 \newcommand{\uU}{\mathfrak U}
 \newcommand{\fb}{f_{\beta}}
 \newcommand{\fg}{f_{\gamma}}
 \newcommand{\gb}{g_{\beta}}
 \newcommand{\vphi}{\varphi}
 \newcommand{\p}{\varphi}
 \newcommand{\ep}{\varepsilon}
 \newcommand{\bo}{\boldsymbol 0}
 \newcommand{\bone}{\boldsymbol 1}
 \newcommand{\balpha}{\boldsymbol \alpha}
 \newcommand{\bb}{\boldsymbol b}
 \newcommand{\bc}{\boldsymbol c}
 \newcommand{\be}{\boldsymbol e}
 \newcommand{\bff}{\boldsymbol f}
 \newcommand{\bk}{\boldsymbol k}
 \newcommand{\bl}{\boldsymbol l}
 \newcommand{\bm}{\boldsymbol m}
 \newcommand{\bn}{\boldsymbol n}
 \newcommand{\bgamma}{\boldsymbol \gamma}
 \newcommand{\blambda}{\boldsymbol \lambda}
 \newcommand{\bq}{\boldsymbol q}
 \newcommand{\bt}{\boldsymbol t}
 \newcommand{\bu}{\boldsymbol u}
 \newcommand{\bv}{\boldsymbol v}
 \newcommand{\bw}{\boldsymbol w}
 \newcommand{\bx}{\boldsymbol x}
 \newcommand{\bwy}{\boldsymbol y}
 \newcommand{\bnu}{\boldsymbol \nu}
 \newcommand{\bxi}{\boldsymbol \xi}
 \newcommand{\bz}{\boldsymbol z}
 \newcommand{\whG}{\widehat{G}}
 \newcommand{\oK}{\overline{K}}
 \newcommand{\oKt}{\overline{K}^{\times}}
 \newcommand{\oQ}{\overline{\Q}}
 \newcommand{\oq}{\oQ^{\times}}
 \newcommand{\oQt}{\oQ^{\times}/\Tor\bigl(\oQ^{\times}\bigr)}
 \newcommand{\ot}{\Tor\bigl(\oQ^{\times}\bigr)}
 \newcommand{\h}{\frac12}
 \newcommand{\hh}{\tfrac12}
 \newcommand{\dx}{\text{\rm d}x}
 \newcommand{\dbx}{\text{\rm d}\bx}
 \newcommand{\dy}{\text{\rm d}y}
 \newcommand{\dmu}{\text{\rm d}\mu}
 \newcommand{\dnu}{\text{\rm d}\nu}
 \newcommand{\dla}{\text{\rm d}\lambda}
 \newcommand{\dlav}{\text{\rm d}\lambda_v}
 \newcommand{\trho}{\widetilde{\rho}}
 \newcommand{\dtrho}{\text{\rm d}\widetilde{\rho}}
 \newcommand{\drho}{\text{\rm d}\rho}

\title[heights]{On the height of solutions\\to norm form equations}
\author{Shabnam Akhtari and Jeffrey~D.~Vaaler}
\subjclass[2010]{11J25, 11R27, 11S20}
\keywords{independent units, Weil height, norm form equations}
\thanks{}
\medskip

\address{Department of Mathematics, University of Oregon, Eugene, Oregon 97402 USA}
\email{akhtari@uoregon.edu}
\medskip

\address{Department of Mathematics, University of Texas, Austin, Texas 78712 USA}
\email{vaaler@math.utexas.edu}

\thanks{Shabnam Akhtari's research is funded by the NSF grant DMS-1601837.}

\begin{abstract}  Let $k$ be a number field.  We consider norm form equations associated to a full $O_k$-module
contained in a finite extension field $l$.  It is known that the set of solutions is naturally a union of disjoint equivalence classes of 
solutions.  We prove that each nonempty equivalence class of solutions contains a representative with Weil height 
bounded by an expression that depends on parameters defining the norm form equation.
\end{abstract}
\maketitle
\numberwithin{equation}{section}

\section{Introduction}
 
 Classically norm form equations are defined over the field of rational numbers. 
Let $\omega_1, \omega_2, \dots , \omega_N$, be points in $\oQ$ that are $\Q$-linearly independent, and let 
\begin{equation*}\label{norm0}
K = \Q(\omega_1, \omega_2, \dots , \omega_N)
\end{equation*} 
be the algebraic number field that they generate.  We
assume that $[K : \Q] = d$, and we write $\sigma_1, \sigma_2, \dots , \sigma_d$, for the distinct embeddings of 
$K$ into $\oQ$.  Using $\omega_1, \omega_2, \dots , \omega_N$, we define a homogeneous polynomial in a vector
variable $\bx$ having $N$ independent coordinates $x_1, x_2, \dots , x_N$, by
\begin{align}\label{norm1}
G(\bx) = \prod_{i=1}^d \bigg\{\sum_{n=1}^N \sigma_i(\omega_n) x_n\bigg\}.
\end{align}
It is easy to verify that $G(\bx)$ has rational coefficients and, as $\omega_1, \omega_2, \dots , \omega_N$, are $\Q$-linearly 
independent, $G(\bx)$ is not identically zero.  The homogeneous polynomial $G(\bx)$ is called a {\it norm form}, 
because if $\bxi$ is a nonzero point with rational integer coordinates $\xi_1, \xi_2, \dots , \xi_N$, then
\begin{equation*}
G(\bxi) = \Norm_{K/\Q}\bigl(\omega_1 \xi_1 + \omega_2 \xi_2 + \cdots + \omega_N \xi_N\bigr),
\end{equation*}
where
\begin{equation*}
\Norm_{K/\Q} : K^{\times} \rightarrow \Q^{\times}
\end{equation*}
is the norm homomorphism. 
In  \cite{schmidt1971} Schmidt proved his fundamental result, that a norm form equation $G(\bx) = b$, where 
$b \in \mathbb{Q}$,  has only finitely
many solutions if $G$ satisfies some natural non-degeneracy condition. Later, in another breakthrough work 
\cite{schmidt1972}, Schmidt dealt also with the case that $G$ is degenerate and showed that in that case, the set of solutions 
of the norm form equation can be partitioned in a natural way into families, and is the union of finitely many such 
families.   This was soon followed by an analogous $p$-adic result due to Schlickewei \cite{schlickewei1977}.
Schmidt's results have been generalized in different interesting ways (e.g. \cite{Evertse00, Evertse95, EvGy97}), in particular 
Laurent, in \cite{laurent1984}, considered norm form equations into $k$, a finite algebraic extension of $\mathbb{Q}$.

Let $k$ and $l$ be algebraic number fields such that
\begin{equation*}\label{intro1}
\Q \subseteq k \subseteq l \subseteq \oQ,
\end{equation*}
where $\oQ$ is an algebraic closure of $\Q$.  We write $k^{\times}$ and $l^{\times}$ for the multiplicative group of nonzero
elements in $k$ and $l$, respectively, and
\begin{equation}\label{intro5}
\Norm_{l/k} : l^{\times} \rightarrow k^{\times}
\end{equation}
for the norm homomorphism.  We also write $O_k$ for the ring of algebraic integers in $k$, $O_k^{\times}$ for the
multiplicative group of units in $O_k$, and $\Tor\bigl(O_k^{\times}\bigr)$ for the finite group of roots of unity in $O_k^{\times}$.
Then $O_l$, $O_l^{\times}$, and $\Tor\bigl(O_l^{\times}\bigr)$ are the analogous subsets in $l$.  

Let $\omega_1, \omega_2, \dots , \omega_e$, be $k$-linearly independent elements of $l$ that form a basis for $l$ as a
$k$-vector space, and let $\sigma_1, \sigma_2, \dots , \sigma_e$, be the collection of distinct embeddings of 
$l$ into $\oQ$ that fix the subfield $k$.  It follows that 
\begin{equation}\label{intro20}
F(\bx) = \prod_{i =1}^e \bigl(\sigma_i(\omega_1) x_1 + \sigma_i(\omega_2) x_2 + \cdots + \sigma_i(\omega_e) x_e\bigr)
\end{equation}
is a homogeneous polynomial of degree $e$ in independent variables $x_1, x_2, \dots , x_e$, and the coefficients of $F$ belong 
to the field $k$.  The homogeneous polynomial $F(\bx)$ defined by (\ref{intro20}) is an example of a {\it norm form}.
For $\beta \not= 0$ in $k$, we consider the norm form equation
\begin{equation}\label{intro25}
F(\bx) = \zeta \beta,\quad\text{where $\zeta \in \Tor\bigl(O_k^{\times}\bigr)$},
\end{equation}
and we seek to describe the solutions in $(O_k)^e$.

Rather than working with the polynomial $F$ defined by (\ref{intro20}), we will work instead with the full $O_k$-module
\begin{equation}\label{intro35}
\mM = \big\{\omega_1 \nu_1 + \omega_2 \nu_2 + \cdots + \omega_e \nu_ e : 
				\text{$\nu_i \in O_k$ for $i = 1, 2, \dots , e$}\big\}
\end{equation}
generated by the basis $\omega_1, \omega_2, \dots , \omega_e$.  If $\bnu = (\nu_i)$ is a nonzero point in $(O_k)^e$, we have
\begin{equation*}\label{intro40}
F(\bnu) = \Norm_{l/k}(\mu),
\end{equation*}
where
\begin{equation*}\label{intro45}
\mu = \omega_1 \nu_1 + \omega_2 \nu_2 + \cdots + \omega_e \nu_ e
\end{equation*}
belongs to the full $O_k$-module $\mM$.  Thus for $\beta \not= 0$ in $k$, we wish to describe the set of solutions
\begin{equation}\label{intro55}
\big\{\mu \in \mM : \Norm_{l/k}(\mu) \in \Tor\bigl(O_k^{\times}\bigr) \beta\big\}.
\end{equation}
There is a natural equivalence relation in $\mM \setminus \{0\}$, such that the set (\ref{intro55}) is either 
empty, or it is a disjoint union of finitely many equivalence classes.

If $\alpha \not= 0$ belongs to $l$, then $\alpha \omega_1, \alpha \omega_2, \dots , \alpha \omega_e$, is also
a basis for $l$ as a $k$-vector space.  This second basis generates the full $O_k$-module
\begin{equation*}\label{intro56}
\alpha \mM = \big\{\alpha \omega_1 \nu_1 + \alpha \omega_2 \nu_2 + \cdots + \alpha \omega_e \nu_ e : 
				\text{$\nu_i$ in $O_k$ for $i = 1, 2, \dots , e$}\big\}.
\end{equation*}
We say that the $O_k$-modules $\mM$ and $\alpha \mM$ are {\it proportional}.  It is obvious that proportionality is an
equivalence relation in the collection of all full $O_k$-modules contained in $l$.  As
\begin{equation*}\label{intro57}
\Norm_{l/k}(\alpha \mu) = \Norm_{l/k}(\alpha) \Norm_{l/k}(\mu),
\end{equation*}
the problem of describing the solution set (\ref{intro55}) changes insignificantly if the $O_k$-module $\mM$ is replaced 
by a proportional $O_k$-module $\alpha \mM$.  Each proportionality class plainly contains a representative that is a 
subset of $O_l$.  Therefore in the remainder of this paper we assume that $\mM \subseteq O_l$.  With this 
assumption we can restrict our attention to solution sets (\ref{intro55}) such that $\beta \not= 0$ also belongs to $O_k$.

The {\it coefficient ring} associated to the full module $\mM$ is the subset
\begin{equation}\label{intro60}
O_{\mM} = \big\{\alpha \in l : \alpha \mM \subseteq \mM\big\}.
\end{equation}
It is easy to check that proportional $O_k$-modules contained in $l$ have the same coefficient ring. 
Let $\psi_1, \psi_2, \dots , \psi_f$, be an integral basis for $O_k$.  It follows that
\begin{equation*}\label{intro61}
e = [l : k],\quad f = [k : \Q],
\end{equation*}
and that
\begin{equation}\label{intro63}
\big\{\omega_i \psi_j : \text{$i = 1, 2, \dots , e$, and $j = 1, 2, \dots , f$}\big\}
\end{equation}
is a basis for $\mM$ as a full $\Z$-module in $l$.  Therefore we can appeal to classical results such as
\cite[Chap. 2, Sec. 2, Theorem 3]{BS1966}, and conclude that the coefficient ring $O_{\mM}$ is an order in $l$.
We recall (see \cite[Chapter 5, section 1]{Weil}) that $O_l$ is the maximal order in $l$, so that
\begin{equation*}\label{intro65}
O_{\mM} \subseteq O_l.
\end{equation*}

Let $r(l)$ be the rank of the group $O_l^{\times}$ of units in $O_l$, and let $r(k)$ be the rank of $O_k^{\times}$.
By the extension of Dirichlet's unit theorem to orders, the subgroup 
\begin{equation*}\label{intro68}
O_{\mM}^{\times} = O_{\mM} \cap O_l^{\times}
\end{equation*}
of units in $O_{\mM}$ has rank $r(l)$, and therefore the index $\bigl[O_l^{\times} : O_{\mM}^{\times}\bigr]$ is finite.  
And it follows from (\ref{intro60}) that
\begin{equation}\label{intro70}
O_{\mM}^{\times} = \big\{\alpha \in l : \alpha \mM = \mM\big\}.
\end{equation}
Hence the group $O_{\mM}^{\times}$ acts on the module $\mM$ by multiplication.  Let
\begin{equation}\label{intro80}
\E_{l/k}(\mM) = \big\{\alpha \in O_{\mM}^{\times} : \Norm_{l/k}(\alpha) \in \Tor\bigl(O_k^{\times}\bigr)\big\}
\end{equation}
be the subgroup of {\it relative units} in the coefficient ring $O_{\mM}$.   In Lemma \ref{lemprelim1} we show that the 
subgroup $\E_{l/k}(\mM)$ has rank
\begin{equation*}\label{intro82}
r(l/k) = r(l) - r(k).
\end{equation*}

Now suppose that $\beta \not= 0$ belongs to $O_k$, and $\mu$ in $\mM$ satisfies
\begin{equation}\label{intro85}
\Norm_{l/k}(\mu) = \zeta \beta, \quad\text{where $\zeta \in \Tor\bigl(O_k^{\times}\bigr)$}.
\end{equation}
If $\gamma$ belongs to the group $\E_{l/k}(\mM)$ of relative units in $O_{\mM}$, then (\ref{intro70}) implies that $\gamma \mu$ 
belongs to $\mM$.  And it follows from (\ref{intro80}) that
\begin{equation}\label{intro90}
\Norm_{l/k}(\gamma \mu) = \Norm_{l/k}(\gamma) \zeta \beta = \zeta^{\prime} \beta,
						\quad\text{where $\zeta^{\prime} \in \Tor\bigl(O_k^{\times}\bigr)$}.
\end{equation}
We say that two nonzero elements $\mu_1$ and $\mu_2$ in $\mM$ are {\it equivalent} if there exists an element $\gamma$ in
the group $\E_{l/k}(\mM)$ such that $\gamma \mu_1 = \mu_2$.  It is trivial that this is an equivalence relation in 
$\mM \setminus \{0\}$.  Indeed, each equivalence class is also a coset in the quotient group $l^{\times}/\E_{l/k}(\mM)$.
It follows from (\ref{intro85}) and (\ref{intro90}) that for each $\beta \not= 0$ in $O_k$, the set 
(\ref{intro55}) is a disjoint union of equivalence classes.  It is known that (\ref{intro55}) is a disjoint union of finitely many 
such equivalence classes (see \cite{laurent1984, schmidt1972}).  A finiteness result of this sort also follows from 
Northcott's theorem \cite{northcott1949} 
(see also \cite[Theorem 1.6.8]{bombieri2006}) and the following inequality.  Here we write $\alpha \mapsto h(\alpha)$ for 
the Weil height of an algebraic number $\alpha \not= 0$, and we define this explicitly in (\ref{unit35}).

\begin{theorem}\label{thmintro1}  Let the full $O_k$-module $\mM \subseteq O_l$ be defined by {\rm (\ref{intro35})}, 
and assume that the rank $r(l/k)$ of the group $\E_{l/k}(\mM)$ of relative units is positive.  Let
\begin{equation}\label{intro95}
\ep_1, \ep_2, \dots , \ep_{r(l/k)},
\end{equation}
be multiplicatively independent units in the subgroup $\E_{l/k}(\mM)$.
Assume that $\beta \not= 0$ is a point in $O_k$, and $\mu \not= 0$ is a point in $\mM$, such that
\begin{equation}\label{intro100}
\Norm_{l/k}(\mu) = \zeta \beta,\quad\text{where $\zeta \in \Tor\bigl(O_k^{\times}$}\bigr).
\end{equation}
Then there exists an element $\gamma$ in $\E_{l/k}(\mM)$, such that $\gamma \mu$ belongs to $\mM$,
\begin{equation}\label{intro105}
\Norm_{l/k}(\gamma \mu) = \zeta^{\prime} \beta,\quad\text{where $\zeta^{\prime} \in \Tor\bigl(O_k^{\times}$}\bigr),
\end{equation}
and
\begin{equation}\label{intro110}
h(\gamma \mu) \le \hh \sum_{j = 1}^{r(l/k)} h(\ep_j) + [l : k]^{-1} h(\beta).
\end{equation}
\end{theorem}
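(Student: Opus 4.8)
The plan is to turn the reduction into a lattice-reduction problem in the logarithmic space attached to the archimedean places of $l$, exploiting that multiplication by a relative unit leaves every non-archimedean absolute value unchanged, and then to estimate the resulting height place by place.

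First I would fix the normalized absolute values $\|\cdot\|_v$ on $l$ (and $\|\cdot\|_w$ on $k$) in the usual way, so that the product formula holds and $h(\alpha)=[l:\Q]^{-1}\sum_v\log^+\|\alpha\|_v$. Write $S$ for the set of archimedean places of $l$, $S_0$ for those of $k$, and $v\mid w$ when $v$ restricts to $w$; for $\alpha\in l^\times$ set $\Lambda(\alpha)=(\log\|\alpha\|_v)_{v\in S}$. The relevant subspace is
$$H=\Bigl\{x\in\R^S:\textstyle\sum_{v\mid w}x_v=0\ \text{for all}\ w\in S_0\Bigr\},$$
of dimension $|S|-|S_0|=r(l)-r(k)=r(l/k)$. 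Since each $\ep_j$ is a unit (so $\log\|\ep_j\|_v=0$ off $S$) with $\Norm_{l/k}(\ep_j)\in\Tor(O_k^\times)$ (so $\sum_{v\mid w}\log\|\ep_j\|_v=\log\|\Norm_{l/k}(\ep_j)\|_w=0$), one has $\Lambda(\ep_j)\in H$; and because the $\ep_j$ are multiplicatively independent units, the $r(l/k)$ vectors $\Lambda(\ep_1),\dots,\Lambda(\ep_{r(l/k)})$ are $\R$-linearly independent and hence form a basis of $H$.

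Given $\mu$ as in (\ref{intro100}), the local factorization of the norm gives $\prod_{v\mid w}\|\mu\|_v=\|\Norm_{l/k}(\mu)\|_w=\|\beta\|_w$ for every place $w$ of $k$, so $\xi:=(\log\|\mu\|_v)_{v\in S}$ satisfies $\sum_{v\mid w}\xi_v=\log\|\beta\|_w$ on $S_0$. I would absorb this ``defect'' into the vector $\eta\in\R^S$ obtained by spreading it evenly, $\eta_v=n_w^{-1}\log\|\beta\|_w$ for $v\mid w$ with $n_w=\#\{v\in S:v\mid w\}$, so that $\xi-\eta\in H$. Expanding $\xi-\eta=\sum_j s_j\Lambda(\ep_j)$ in the basis above and rounding each $s_j$ to a nearest integer $a_j$, put $\gamma=\prod_j\ep_j^{-a_j}\in\E_{l/k}(\mM)$ and $t_j=s_j-a_j$, so that $\Lambda(\gamma)+\xi-\eta=\sum_j t_j\Lambda(\ep_j)$ with $|t_j|\le\tfrac12$. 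By (\ref{intro70}) we have $\gamma\mu\in\mM$, and (\ref{intro105}) holds because $\Norm_{l/k}(\gamma)$ is a root of unity.

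Finally I would estimate $h(\gamma\mu)$ place by place. As $\gamma$ is a unit and $\mu,\gamma\mu\in O_l$, all non-archimedean values of $\gamma\mu$ are $\le1$, so $h(\gamma\mu)=[l:\Q]^{-1}\sum_{v\in S}\log^+\|\gamma\mu\|_v$, and for $v\in S$ one has $\log\|\gamma\mu\|_v=\sum_j t_j(\Lambda(\ep_j))_v+\eta_v$. Subadditivity of $x\mapsto\max\{0,x\}$ gives $\log^+\|\gamma\mu\|_v\le\sum_j\max\{0,t_j(\Lambda(\ep_j))_v\}+\max\{0,\eta_v\}$; summing over $S$ and using $h(\ep_j^{-1})=h(\ep_j)$ yields $\sum_{v\in S}\max\{0,t_j(\Lambda(\ep_j))_v\}=|t_j|\,[l:\Q]\,h(\ep_j)\le\tfrac12[l:\Q]h(\ep_j)$, while $\sum_{v\in S}\max\{0,\eta_v\}=\sum_{w\in S_0}\log^+\|\beta\|_w\le[k:\Q]h(\beta)$. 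Dividing by $[l:\Q]$ and using $[k:\Q]/[l:\Q]=[l:k]^{-1}$ gives (\ref{intro110}). The argument is largely routine; the points to watch are the bookkeeping of normalizations so that the constants come out exactly $\tfrac12$ and $[l:k]^{-1}$, the dimension count showing the $\Lambda(\ep_j)$ span $H$, and the reduction to the archimedean places, which uses both that $\gamma$ is a unit and that $\mu$ and $\gamma\mu$ lie in $O_l$.
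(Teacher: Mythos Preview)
Your argument is correct and follows essentially the same route as the paper: both translate the archimedean log-vector of $\mu$ by a fiberwise average to land in the subspace $\D_{r(l/k)}$, expand in the basis $\Lambda(\ep_j)$, round to the nearest integer to produce $\gamma$, and then combine the archimedean estimate with the integrality of $\gamma\mu$ at the finite places. The only cosmetic differences are that the paper packages the rounding step as two separate lemmas and uses the identity $h(\alpha)=\tfrac12\sum_w\bigl|\log|\alpha|_w\bigr|$ together with the triangle inequality, whereas you use $h(\alpha)=\sum_w\log^+|\alpha|_w$ together with subadditivity of $x\mapsto\max\{0,x\}$; both lead to the same constants.
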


So as to give a complete treatment of the problem considered here, we also prove the following much simpler result.

\begin{theorem}\label{thmintro2}  Let the full $O_k$-module $\mM \subseteq O_l$ be defined by {\rm (\ref{intro35})}, 
and assume that the rank $r(l/k)$ of the group $\E_{l/k}(\mM)$ of relative units is zero.  
Assume that $\beta \not= 0$ is a point in $O_k$, and $\mu \not= 0$ is a point in $\mM$, such that
\begin{equation}\label{intro130}
\Norm_{l/k}(\mu) = \zeta \beta,\quad\text{where $\zeta \in \Tor\bigl(O_k^{\times}$}\bigr).
\end{equation}
Then we have
\begin{equation}\label{intro135}
h(\mu) = [l : k]^{-1} h(\beta).
\end{equation}
\end{theorem}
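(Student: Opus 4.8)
The plan is to use Lemma~\ref{lemprelim1} to reduce to the case in which $l$ is a CM field with totally real subfield $k$, and then to read the identity (\ref{intro135}) directly off the local decomposition of the Weil height. Since multiplication by a root of unity does not change the Weil height, I may assume $\zeta = 1$, so that $\Norm_{l/k}(\mu) = \beta$. If $l = k$ the claim is trivial, because $\Norm_{l/k}$ is then the identity and $[l:k] = 1$; so suppose $e = [l:k] \ge 2$. By Lemma~\ref{lemprelim1}, the hypothesis that $\E_{l/k}(\mM)$ has rank zero is precisely the statement $r(l) = r(k)$, and Dirichlet's unit theorem identifies $r(F)$ with one less than the number of archimedean places of $F$, for any number field $F$. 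Hence $l$ and $k$ have the same number of archimedean places. Since each archimedean place of $k$ has at least one extension to $l$, this forces each archimedean place $w$ of $k$ to have a unique extension $v$ to $l$, whence $[l_v : k_w] = [l:k] = e$. An archimedean local degree is at most $2$, so $e = 2$, and $[l_v : k_w] = 2$ forces $k_w = \R$ and $l_v = \C$. Therefore $k$ is totally real, $l$ is totally imaginary, and $[l:k] = 2$: that is, $l$ is a CM field with totally real subfield $k$.

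Next I would bring in the CM structure. Let $c$ denote the nontrivial automorphism of $l$ over $k$, so that $\Norm_{l/k}(\mu) = \mu\, c(\mu)$. Because $l$ is totally imaginary quadratic over the totally real field $k$, the automorphism $c$ is realized as ordinary complex conjugation under every embedding of $l$ into $\C$; in particular $|c(\mu)|_v = |\mu|_v$ at each archimedean place $v$ of $l$. Moreover $\mu \in \mM \subseteq O_l$, so $c(\mu)$ and $\mu\, c(\mu)$ lie in $O_l$ as well; hence in the local decomposition of the Weil height (see (\ref{unit35})) only the archimedean places of $l$ contribute to $h(\mu)$ and to $h\bigl(\mu\, c(\mu)\bigr)$.

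Given these two facts the computation is immediate. At an archimedean place $v$ of $l$ the local term of $h\bigl(\mu\, c(\mu)\bigr)$ equals $\log^{+}\!\bigl(|\mu|_v\,|c(\mu)|_v\bigr) = \log^{+}\!\bigl(|\mu|_v^{2}\bigr) = 2\log^{+}|\mu|_v$, using $\log^{+}(t^{2}) = 2\log^{+}(t)$ for $t>0$; summing over the archimedean places with the standard local weights gives $h\bigl(\mu\, c(\mu)\bigr) = 2\,h(\mu)$. On the other hand $\mu\, c(\mu) = \Norm_{l/k}(\mu) = \beta$, so the left side equals $h(\beta)$; thus $2\,h(\mu) = h(\beta)$, and therefore $h(\mu) = [l:k]^{-1} h(\beta)$, which is (\ref{intro135}). (One can argue equally well on the base field: each archimedean place $w$ of $k$ satisfies $|\mu|_{v(w)}^{2} = |\Norm_{l/k}(\mu)|_w = |\beta|_w$, and summing over $w$ — using that $w \mapsto v(w)$ is a bijection onto the archimedean places of $l$ and that $[l:\Q] = 2[k:\Q]$ — yields the same identity.)

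The one step carrying real content is the structural reduction, namely deducing from $r(l) = r(k)$ that $l$ must be a CM field of degree $2$ over the totally real field $k$; after that, everything is routine bookkeeping with the sum-over-places formula for $h$. The only point that needs care is that the identity $h\bigl(\mu\, c(\mu)\bigr) = 2\,h(\mu)$ really does use $\mu \in O_l$: the relation $|c(\mu)|_v = |\mu|_v$ can fail at non-archimedean $v$, since complex conjugation may permute the finite places of $l$ nontrivially, and without integrality those places would contribute spurious terms. For orientation, note that the inequality $h(\mu) \ge [l:k]^{-1} h\bigl(\Norm_{l/k}(\mu)\bigr)$ holds over an arbitrary finite extension, by subadditivity of the height applied to $\Norm_{l/k}(\mu) = \sigma_1(\mu)\cdots\sigma_e(\mu)$ together with the invariance of $h$ under the embeddings $\sigma_i$; the force of the hypothesis $r(l/k) = 0$ is precisely that it promotes this inequality to an equality.
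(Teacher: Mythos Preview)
Your proof is correct and follows essentially the same route as the paper: use $r(l)=r(k)$ to reduce to the CM case where each archimedean place of $k$ has a unique extension to $l$, then read off the identity from the archimedean places using that $\mu$ and $\beta$ are algebraic integers. The only differences are cosmetic---you supply a self-contained derivation of the CM structure (the paper cites Narkiewicz) and handle $l=k$ explicitly, and your computation via $|c(\mu)|_v=|\mu|_v$ at archimedean $v$ is just the paper's place-wise norm identity $[l:k]\log|\mu|_{w_v}=\log|\beta|_v$ viewed from $l$ rather than from $k$.
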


\section{The rank of the group of relative units}\label{rank}

Following Costa and Friedman \cite{costa1991} and \cite{costa1993}, the subgroup of relative units in $O_l^{\times}$ 
with respect to the subfield $k$, is defined by
\begin{equation*}\label{prelim1}
\E_{l/k} = \big\{\alpha \in O_l^{\times} : \Norm_{l/k}(\alpha) \in \Tor\bigl(O_k^{\times}\bigr)\big\}.
\end{equation*}
Hence the subgroup of relative units in $O_{\mM}$ is
\begin{equation*}\label{prelim5}
\E_{l/k}(\mM) = \E_{l/k} \cap O_{\mM}^{\times}.
\end{equation*}
Costa and Friedman show that $\E_{l/k}$ has rank $r(l) - r(k)$ (see also \cite[section 3]{akhtari2015}).  Here
we show that the subgroup $\E_{l/k}(\mM)$ also has rank $r(l) - r(k)$.

\begin{lemma}\label{lemprelim1}  Let the full $O_k$-module $\mM \subseteq O_l$ be defined by {\rm (\ref{intro35})}, 
and let the subgroup $\E_{l/k}(\mM)$ of relative units in $O_{\mM}^{\times}$ be defined by {\rm (\ref{intro80})}.  Then
the rank of $\E_{l/k}(\mM)$ is
\begin{equation}\label{prelim10}
r(l/k) = r(l) - r(k).
\end{equation}
\end{lemma}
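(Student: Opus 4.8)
The plan is to deduce the lemma directly from the theorem of Costa and Friedman, using the fact that $O_{\mM}^{\times}$ is a subgroup of finite index in $O_l^{\times}$. First I would recall that, by the results of \cite{costa1991, costa1993} (see also \cite[section 3]{akhtari2015}), the group $\E_{l/k}$ of all relative units in $O_l^{\times}$ has rank $r(l) - r(k)$. By definition we have $\E_{l/k}(\mM) = \E_{l/k} \cap O_{\mM}^{\times}$, and both $\E_{l/k}$ and $O_{\mM}^{\times}$ are subgroups of $O_l^{\times}$, so it suffices to show that passing from $\E_{l/k}$ to $\E_{l/k}(\mM)$ does not change the rank.

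Next I would record that $O_{\mM}^{\times}$ has finite index in $O_l^{\times}$. Indeed, the basis exhibited in {\rm (\ref{intro63})} presents $\mM$ as a full $\Z$-module in $l$, so by the classical result cited in the introduction $O_{\mM}$ is an order in $l$, and the extension of Dirichlet's unit theorem to orders gives that $O_{\mM}^{\times}$ has rank $r(l)$. Since $O_{\mM}^{\times} \subseteq O_l^{\times}$, both groups are finitely generated abelian, and they have the same rank, the quotient $O_l^{\times}/O_{\mM}^{\times}$ is finite.

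The key step is then the elementary group-theoretic remark that if $A \subseteq B$ is a subgroup of finite index in a finitely generated abelian group, and $S \subseteq B$ is any subgroup, then, by the second isomorphism theorem,
\begin{equation*}
[S : S \cap A] = [S + A : A] \le [B : A] < \infty,
\end{equation*}
so that $S \cap A$ has the same rank as $S$. Applying this with $B = O_l^{\times}$, $A = O_{\mM}^{\times}$, and $S = \E_{l/k}$ gives
\begin{equation*}
\bigl[\E_{l/k} : \E_{l/k}(\mM)\bigr] \le \bigl[O_l^{\times} : O_{\mM}^{\times}\bigr] < \infty,
\end{equation*}
and hence $\E_{l/k}(\mM)$ has the same rank as $\E_{l/k}$, namely $r(l) - r(k)$, which is the assertion {\rm (\ref{prelim10})}.

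There is no serious obstacle here: the only substantive input beyond elementary group theory is the finiteness of the index $\bigl[O_l^{\times} : O_{\mM}^{\times}\bigr]$, which is already noted in the introduction as a consequence of Dirichlet's unit theorem for orders, together with the cited rank computation for $\E_{l/k}$. (If one preferred a more self-contained argument one could instead work with the logarithmic embedding of $O_l^{\times}$ into $\R^{r(l)}$ and verify that the $\R$-span of the image of $\E_{l/k}(\mM)$ coincides with that of $\E_{l/k}$, but the finite-index argument above is shorter.)
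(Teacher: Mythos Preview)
Your proof is correct, but it follows a genuinely different route from the paper's. You take the rank of $\E_{l/k}$ as a black box from Costa--Friedman and then observe that intersecting with the finite-index subgroup $O_{\mM}^{\times}$ cannot drop the rank, via the second isomorphism theorem. The paper instead works intrinsically with $O_{\mM}^{\times}$: it considers the induced homomorphism
\[
\norm_{l/k} : O_{\mM}^{\times}/\Tor\bigl(O_{\mM}^{\times}\bigr) \longrightarrow O_k^{\times}/\Tor\bigl(O_k^{\times}\bigr),
\]
identifies (the image of) $\E_{l/k}(\mM)$ with its kernel, and then shows directly that the image has rank $r(k)$ by pushing suitable powers of a system of independent units of $O_k^{\times}$ into $O_{\mM}^{\times}\cap O_k^{\times}$ (this is where the paper uses the finiteness of $[O_l^{\times}:O_{\mM}^{\times}]$). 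Rank--nullity then gives $r(l)-r(k)$ for the kernel. So the paper essentially reproves the Costa--Friedman rank computation at the level of the order $O_{\mM}$, whereas you transfer the known computation from $O_l$ down to $O_{\mM}$ by a finite-index argument. Your approach is shorter and perfectly valid; the paper's has the minor advantage of not relying on the Costa--Friedman statement as input. One cosmetic point: since everything is written multiplicatively, you may want to write $SA$ rather than $S+A$ when invoking the second isomorphism theorem.
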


\begin{proof}  As $O_{\mM}$ is an order in $l$, it follows from the extension of Dirichlet's unit theorem to orders 
(see \cite[Chap. 2, Sec. 4, Theorem 5]{BS1966}) that the group of units $O_{\mM}^{\times}$ has rank $r(l)$.  The 
norm (\ref{intro5}) restricted to $O_{\mM}^{\times}$ is a homomorphism
\begin{equation*}\label{prelim20}
\Norm_{l/k} : O_{\mM}^{\times} \rightarrow O_k^{\times},
\end{equation*}
and the norm restricted to the torsion subgroup $\Tor\bigl(O_{\mM}^{\times}\bigr)$ is a homomorphism
\begin{equation*}\label{prelim25}
\Norm_{l/k} : \Tor\bigl(O_{\mM}^{\times}\bigr) \rightarrow \Tor\bigl(O_k^{\times}\bigr).
\end{equation*}
Hence we get a well defined homomorphism, which we write as
\begin{equation}\label{prelim30}
\norm_{l/k} : O_{\mM}^{\times}/ \Tor\bigl(O_{\mM}^{\times}\bigr) \rightarrow O_k^{\times}/\Tor\bigl(O_k^{\times}\bigr),
\end{equation}
by setting
\begin{equation*}\label{prelim35}
\norm_{l/k}\bigl(\alpha \Tor\bigl(O_{\mM}^{\times}\bigr)\bigr) = \Norm_{l/k}(\alpha) \Tor\bigl(O_k^{\times}\bigr).
\end{equation*}
To simplify notation we write
\begin{equation*}\label{prelim40}
\F_{\mM} = O_{\mM}^{\times}/ \Tor\bigl(O_{\mM}^{\times}\bigr),\quad
							\text{and}\quad \F_k = O_k^{\times}/\Tor\bigl(O_k^{\times}\bigr),
\end{equation*}
and we use coset representatives rather than cosets for points in $\F_{\mM}$ and $\F_k$.  It is clear that 
$\F_{\mM}$ and $\F_k$ are free groups such that
\begin{equation*}\label{prelim45}
\rank \F_{\mM} = \rank O_{\mM}^{\times} = r(l),\quad\quad \rank \F_k = \rank O_k^{\times} = r(k),
\end{equation*}
and
\begin{equation*}\label{prelim50}
\norm_{l/k} : \F_{\mM} \rightarrow \F_k.
\end{equation*}
We note that the image of the subgroup
\begin{equation*}\label{prelim55}
\E_{l/k}(\mM) = \big\{\alpha \in O_{\mM}^{\times} : \Norm_{l/k}(\alpha) \in \Tor\bigl(O_k^{\times}\bigr)\big\}
\end{equation*}
of relative units in the group $\F_{\mM}$, is the kernel
\begin{equation}\label{prelim60}
\big\{\alpha \in \F_{\mM} : \norm_{l/k} (\alpha) = 1\big\}
\end{equation} 
of the homomorphism $\norm_{l/k}$.  Thus it suffice to show that the kernel (\ref{prelim60}) has rank $r(l) - r(k)$.

Let $\vphi_1, \vphi_2, \dots , \vphi_{r(k)}$ be multiplicatively independent elements in the group $O_k^{\times}$.
As $O_k^{\times} \subseteq O_l^{\times}$ and the index $[O_l^{\times} : O_{\mM}^{\times}]$ is finite, there exist
positive integers $m_1, m_2, \dots , m_{r(k)}$ such that
\begin{equation*}\label{prelim65}
\vphi_1^{m_1}, \vphi_2^{m_2}, \dots , \vphi_{r(k)}^{m_{r(k)}}
\end{equation*}
are multiplicatively independent elements in 
\begin{equation*}\label{prelim70}
O_{\mM}^{\times} \cap O_k^{\times}.
\end{equation*}
For each $j = 1, 2, \dots , r(k)$ we have
\begin{equation*}\label{prelim75}
\norm_{l/k}\bigl(\vphi_j^{m_j}\bigr) = \vphi_j^{e m_j},
\end{equation*}
where $e = [l : k]$.  Hence the image of the  homomorphism $\norm_{l/k}$ in $O_k^{\times}$ has rank $r(k)$.
It follows that the kernel of the homomorphism $\norm_{l/k}$ has rank $r(l) - r(k)$.  We have already noted that
this is the rank of $\E_{l/k}(\mM)$, and so the proof of the lemma is complete.
\end{proof}

\section{Inequalities for relative units}\label{IRU}

At each place $w$ of $l$ we write $l_w$ for the completion of $l$ at $w$, so that $l_w$ is a local field.  We select two 
absolute values $\|\ \|_w$ and $|\ |_w$ from the place $w$.  The absolute value $\|\ \|_w$ extends the usual archimedean or 
nonarchimedean absolute value on the subfield $\Q$.  Then $|\ |_w$ must be a power of $\|\ \|_w$, and we set
\begin{equation}\label{unit30}
|\ |_w = \|\ \|_w^{d_w/d},
\end{equation}
where $d_w = [l_w : \Q_w]$ is the local degree of the extension, and $d = [l : \Q]$ is the global degree.  With these normalizations
the {\it Weil height} (or simply the height) is a function
\begin{equation*}\label{unit32}
h : l^{\times} \rightarrow [0, \infty)
\end{equation*}
defined at each algebraic number $\alpha$ that belongs to $l^{\times}$, by
\begin{equation}\label{unit35}
h(\alpha) = \sum_w \log^+ |\alpha|_w = \hh \sum_w \bigl|\log |\alpha|_w\bigr|.
\end{equation}
Each sum in (\ref{unit35}) is over the set of all places $w$ of $l$, and the equality between the two sums follows 
from the product formula.  Then $h(\alpha)$ depends on the algebraic number $\alpha \not= 0$, but it does not depend on 
the number field $l$ that contains $\alpha$.  It is often useful to recall that the height is constant on each coset of the
quotient group $l^{\times}/\Tor\bigl(l^{\times}\bigr)$, and therefore we have $h(\zeta \alpha) = h(\alpha)$ for each
element $\alpha$ in $l^{\times}$, and each root of unity $\zeta$ in $\Tor\bigl(l^{\times}\bigr)$.  Elementary properties of the 
height (see \cite{bombieri2006} for further details) imply that the map $(\alpha, \beta) \mapsto h\bigl(\alpha \beta^{-1}\bigr)$ 
defines a metric on the group $l^{\times}/\Tor\bigl(l^{\times}\bigr)$.  If $\alpha$ belongs to the subgroup $k^{\times}$, we have
\begin{equation}\label{unit34}
h(\alpha) = \hh \sum_w \bigl|\log |\alpha|_w\bigr| = \hh \sum_v \bigl|\log |\alpha|_v\bigr|,
\end{equation}
where the sum on the right of (\ref{unit34}) is over the set of all places $v$ of $k$, and the absolute values $|\ |_v$ are 
normalized with respect to $k$.  We write $v$ for a place of $k$, and use $w$ or $x$ for a place of $l$.  
Additional properties of the Weil height on groups are discussed in \cite{akhtari2015}, and \cite{vaaler2014}.

For each place $v$ of $k$ we write
\begin{equation*}\label{unit38}
W_v(l/k) = \big\{w : \text{$w$ is a place of $l$ and $w | v$}\big\}.
\end{equation*}
The set $W_{\infty}(l/\Q)$ of archimedean (or infinite) places of $l$ has cardinality $r(l) + 1$, and similarly the set $W_{\infty}(k/\Q)$ 
has cardinality $r(k) + 1$.  Let $\R^{r(l) + 1}$ denote the real vector space of (column) vectors $\bxi = (\xi_w)$ with 
coordinates indexed by places $w$ in $W_{\infty}(l/\Q)$.  We define
\begin{equation}\label{unit43}
\D_{r(l/k)} = \Big\{\bxi \in \R^{r(l) + 1} : \text{$\sum_{w | v} \xi_w = 0$ for each $v$ in $W_{\infty}(k/\Q)$}\Big\},
\end{equation}
so that $\D_{r(l/k)}$ is a subspace of dimension 
\begin{equation*}\label{unit45}
\bigl(r(l) + 1\bigr) - \bigl(r(k) +1\bigr) = r(l) - r(k) = r(l/k).
\end{equation*}
contained in $\R^{r(l) + 1}$.  Let $\eta_1, \eta_2, \dots , \eta_{r(l/k)}$, be a fundamental system of units for the subgroup 
$\E_{l/k}(\mM)$ of relative units in $O_{\mM}^{\times}$, so that
\begin{equation*}\label{unit46}
\E_{l/k}(\mM) = \Tor\bigl(\E_{l/k}(\mM)\bigr) \otimes \big\langle \eta_1, \eta_2, \dots , \eta_{r(l/k)} \big\rangle.
\end{equation*}
Then let $L$ denote the $(r(l) + 1) \times r(l/k)$ real matrix
\begin{equation}\label{unit48}
L = \bigl(\log |\eta_j|_w\bigr),
\end{equation}
where $w$ in $W_{\infty}(l/\Q)$ indexes rows, and $j = 1, 2, \dots , r(l/k)$, indexes columns.  Because the relative regulator does 
not vanish (see \cite{costa1991} and \cite{costa1993}), it follows that the matrix $L$ has $\R$-rank equal to $r(l/k)$.  Then 
using the product formula we find that
\begin{equation}\label{unit53}
\bwy \mapsto L\bwy = \biggl(\sum_{j = 1}^s y_j \log |\eta_j|_v \biggr)
\end{equation}
is a linear map from the $\R$-linear space
\begin{equation}\label{unit58}
\R^{r(l/k)} = \big\{\bwy = (y_j) : \text{$j = 1, 2, \dots , r(l/k)$, and $y_j \in \R$}\big\}
\end{equation}
onto the subspace $\D_{r(l/k)}$.

\begin{lemma}\label{lemunit1}  Let $\ep_1, \ep_2, \dots , \ep_{r(l/k)}$, be a collection of multiplicatively independent elements in 
the group $\E_{l/k}(\mM)$ of relative units, and write
\begin{equation*}\label{unit80}
\ee = \big\langle \ep_1, \ep_2, \dots , \ep_{r(l/k)}\big\rangle \subseteq \E_{l/k}(\mM)
\end{equation*}
for the subgroup they generate.  Let $\bz = (z_w)$ be a vector in the subspace $\D_{r(l/k)}$.  Then there exists 
a point $\gamma$ in $\ee$ such that
\begin{equation}\label{unit85}
\sum_{w | \infty} \bigl|\log |\gamma|_w - z_w\bigr| \le \sum_{j = 1}^{r(l/k)} h(\ep_j).
\end{equation}
\end{lemma}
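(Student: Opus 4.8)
The plan is to translate the claim, via the logarithmic embedding, into a lattice-approximation problem inside the subspace $\D_{r(l/k)}$, and then to solve it by rounding coordinates to the nearest integers.

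The first step is to record that each $\ep_j$, being an element of $\E_{l/k}(\mM) \subseteq O_{\mM}^{\times} \subseteq O_l^{\times}$, is a genuine unit of $O_l$, so that $|\ep_j|_w = 1$ at every nonarchimedean place $w$ of $l$. Substituting this into the formula $h(\alpha) = \hh\sum_w\bigl|\log|\alpha|_w\bigr|$ of (\ref{unit35}) yields
\[
\sum_{w \mid \infty}\bigl|\log|\ep_j|_w\bigr| = 2\,h(\ep_j) \qquad (j = 1, 2, \dots, r(l/k)),
\]
which is the only way the Weil height enters the argument and is responsible for the precise constant in (\ref{unit85}). Next I would introduce the vectors $\be_j = \bigl(\log|\ep_j|_w\bigr)_{w\mid\infty} \in \R^{r(l)+1}$. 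Since $\Norm_{l/k}(\ep_j)$ is a root of unity, the product formula forces $\sum_{w\mid v}\log|\ep_j|_w = 0$ for every archimedean place $v$ of $k$, so $\be_j \in \D_{r(l/k)}$. To see that $\be_1, \dots, \be_{r(l/k)}$ is actually a basis of $\D_{r(l/k)}$, I would write each $\ep_j$ as a root of unity times a power product of the fundamental system $\eta_1, \dots, \eta_{r(l/k)}$ of relative units; multiplicative independence of the $\ep_j$ forces the resulting integer exponent matrix to be nonsingular, and since the columns of the matrix $L$ of (\ref{unit48})—the logarithmic vectors of the $\eta_i$—already span $\D_{r(l/k)}$ by the discussion leading to (\ref{unit53}), the $\be_j$ span it as well. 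As $\dim_{\R}\D_{r(l/k)} = r(l/k)$, they form a basis.

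With this basis fixed, the remaining step is short. I would write the given $\bz \in \D_{r(l/k)}$ as $\bz = \sum_{j=1}^{r(l/k)} t_j\be_j$ with $t_j \in \R$, choose integers $n_j$ with $|t_j - n_j| \le \hh$, and set $\gamma = \prod_{j=1}^{r(l/k)}\ep_j^{\,n_j}$, which lies in $\ee$. Then $\log|\gamma|_w = \sum_j n_j\log|\ep_j|_w$ at each archimedean $w$, so the triangle inequality together with the identity from the first step gives
\[
\sum_{w\mid\infty}\bigl|\log|\gamma|_w - z_w\bigr| = \sum_{w\mid\infty}\Bigl|\sum_j(n_j - t_j)\log|\ep_j|_w\Bigr| \le \sum_j|n_j - t_j|\sum_{w\mid\infty}\bigl|\log|\ep_j|_w\bigr| \le \hh\sum_j 2\,h(\ep_j),
\]
which is exactly (\ref{unit85}).

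There is no deep obstacle here. The step that most wants care is the verification that $\be_1, \dots, \be_{r(l/k)}$ is a basis of $\D_{r(l/k)}$—handled above by reducing to the fundamental system and invoking the nonvanishing of the relative regulator—together with keeping the normalization (\ref{unit30}) of the absolute values consistent so that the constant in $\sum_{w\mid\infty}|\log|\ep_j|_w| = 2\,h(\ep_j)$ is precisely $2$. It is worth noting that the bound is tight in the sense that the nearest-integer rounding contributes exactly the factor $\hh$, which pairs with that constant $2$ to land on $\sum_j h(\ep_j)$ with no slack; using the $\ell^1$-sum over the archimedean places (rather than the maximum or the Euclidean norm) is what makes the clean triangle-inequality estimate available.
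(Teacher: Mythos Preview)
Your proof is correct and follows essentially the same route as the paper's: express the given vector $\bz$ in the basis of $\D_{r(l/k)}$ furnished by the logarithmic images of the $\ep_j$ (justified, as you do and as the paper does, by writing the $\ep_j$ in terms of a fundamental system $\eta_1,\dots,\eta_{r(l/k)}$ and using that the resulting integer matrix is nonsingular), round the coefficients to nearest integers, and finish with the triangle inequality together with $\sum_{w\mid\infty}|\log|\ep_j|_w| = 2h(\ep_j)$. The only cosmetic remark is that the vanishing $\sum_{w\mid v}\log|\ep_j|_w = 0$ for $v\mid\infty$ is really the local norm identity (as in the paper's (\ref{unit200})) applied to $|\Norm_{l/k}(\ep_j)|_v = 1$, rather than the product formula per se.
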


\begin{proof}  Let $M$ be the $(r(l) + 1) \times r(l/k)$ real matrix
\begin{equation}\label{unit90}
M = \bigl(\log |\ep_j|_w\bigr),
\end{equation}
where $w$ in $W_{\infty}(l/\Q)$ indexes rows, and $j = 1, 2, \dots , r(l/k)$, indexes columns.  Because 
$\eta_1, \eta_2, \dots , \eta_{r(l/k)}$, is a basis for the group $E_{l/k}$, there exists an $r(l/k) \times r(l/k)$ matrix 
$A = \bigl(a_{i j}\bigr)$ with integer entires such that
\begin{equation*}\label{unit95}
\log |\ep_j|_w = \sum_{i = 1}^{r(l/k)} a_{i j} \log |\eta_i|_w
\end{equation*}
for each place $w$ in $W_{\infty}(l/\Q)$ and each $j = 1, 2, \dots , r(l/k)$.  Alternatively, we have the matrix equation
\begin{equation*}\label{100}
M = L A.
\end{equation*} 
By hypothesis $\ep_1, \ep_2, \dots , \ep_{r(l/k)}$, are multiplicatively independent elements of $E_{l/k}$.  It follows that $A$ is
nonsingular, and $M$ has rank $r(l/k)$.  Using (\ref{unit53}) we conclude that
\begin{equation*}\label{unit105}
\bwy \mapsto A \bwy \mapsto L A\bwy = M \bwy = \biggl(\sum_{j = 1}^s y_j \log |\ep_j|_v \biggr)
\end{equation*}
is a linear map from the $\R$-linear space (\ref{unit58}) onto the subspace $\D_{r(l/k)}$.  In particular, there exists a unique
point $\bu = (u_j)$ in (\ref{unit58}) such that
\begin{equation}\label{unit110}
z_w = \sum_{j = 1}^{r(l/k)} u_j \log |\ep_j|_w
\end{equation}
at each place $w$ in $W_{\infty}(l/\Q)$.  Let $\bm = (m_j)$ in $\Z^{r(l/k)}$ satisfy
\begin{equation}\label{unit115}
|m_j - u_j| \le \hh,\quad\text{for each $j = 1, 2, \dots , r(l/k)$}.
\end{equation}
Then write
\begin{equation}\label{unit120}
\gamma = \ep_1^{m_1} \ep_2^{m_2} \cdots \ep_s^{m_s},\quad\text{where $s = r(l/k)$},
\end{equation}
so that $\gamma$ belongs to the subgroup $\ee$.  Using (\ref{unit35}), (\ref{unit110}), (\ref{unit115}), and
(\ref{unit120}), we find that
\begin{equation*}\label{unit125}
\begin{split}
\sum_{w | \infty} \bigl|\log |\gamma|_w - z_w\bigr| 
	&= \sum_{w | \infty} \biggl|\sum_{i = 1}^{r(l/k)} m_i \log |\ep_i|_w - \sum_{j = 1}^{r(l/k)} u_j \log |\ep_j|_w\biggr|\\
	&\le \sum_{w | \infty} \sum_{j = 1}^{r(l/k)} |m_j - u_j| \bigl|\log |\ep_i|_w\bigr|\\
	&\le \hh \sum_{w | \infty} \sum_{j = 1}^{r(l/k)} \bigl|\log |\ep_j|_w\bigr|\\
	&= \sum_{j = 1}^{r(l/k)} h(\ep_j).
\end{split}
\end{equation*}
This proves the lemma.
\end{proof}

\begin{lemma}\label{lemunit2}  Let $\ep_1, \ep_2, \dots , \ep_{r(l/k)}$, be a collection of multiplicatively independent elements in 
the group $\E_{l/k}(\mM)$ of relative units, and write
\begin{equation*}\label{unit140}
\ee = \big\langle \ep_1, \ep_2, \dots , \ep_{r(l/k)}\big\rangle \subseteq \E_{l/k}(\mM)
\end{equation*}
for the subgroup they generate.  If $\mu$ belongs to $l^{\times}$, then there exists $\gamma$ in $\ee$ such that
\begin{equation}\label{unit145}
\sum_{v | \infty} \sum_{w | v} \Bigl|\log |\gamma \mu|_w - |W_v(l/k)|^{-1} \sum_{x | v} \log |\gamma \mu|_x\Bigr|
	\le \sum_{j = 1}^{r(l/k)} h(\ep_j).
\end{equation}
\end{lemma}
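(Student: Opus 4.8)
The plan is to deduce Lemma \ref{lemunit2} from Lemma \ref{lemunit1} by choosing the target vector $\bz$ cleverly. First I would define, for each archimedean place $v$ of $k$ and each $w \mid v$ in $W_{\infty}(l/\Q)$, the quantity
\begin{equation*}
z_w = |W_v(l/k)|^{-1} \sum_{x \mid v} \log |\mu|_x - \log |\mu|_w .
\end{equation*}
The key observation is that this vector $\bz = (z_w)$ lies in the subspace $\D_{r(l/k)}$ defined in (\ref{unit43}): summing $z_w$ over all $w \mid v$ for a fixed $v$ gives $\sum_{w\mid v}\bigl(|W_v(l/k)|^{-1}\sum_{x\mid v}\log|\mu|_x\bigr) - \sum_{w\mid v}\log|\mu|_w = \sum_{x\mid v}\log|\mu|_x - \sum_{w\mid v}\log|\mu|_w = 0$, since the first double sum has $|W_v(l/k)|$ identical inner terms. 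So the hypothesis of Lemma \ref{lemunit1} is met.

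Next I would apply Lemma \ref{lemunit1} to this $\bz$ to obtain $\gamma$ in $\ee$ with $\sum_{w\mid\infty}\bigl|\log|\gamma|_w - z_w\bigr| \le \sum_{j=1}^{r(l/k)} h(\ep_j)$. The remaining task is to rewrite the left-hand side of (\ref{unit145}) so that it equals $\sum_{w\mid\infty}\bigl|\log|\gamma|_w - z_w\bigr|$. Since $\gamma\in\ee\subseteq\E_{l/k}(\mM)\subseteq O_{\mM}^\times$, its norm $\Norm_{l/k}(\gamma)$ is a root of unity in $O_k^\times$, hence $|\gamma|_v = 1$ at every archimedean place $v$ of $k$, which by the relation between $|\cdot|_v$ and the $|\cdot|_w$ for $w\mid v$ forces $\sum_{x\mid v}\log|\gamma|_x = 0$ for each archimedean $v$. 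Therefore
\begin{equation*}
|W_v(l/k)|^{-1}\sum_{x\mid v}\log|\gamma\mu|_x = |W_v(l/k)|^{-1}\sum_{x\mid v}\log|\mu|_x ,
\end{equation*}
and consequently
\begin{equation*}
\log|\gamma\mu|_w - |W_v(l/k)|^{-1}\sum_{x\mid v}\log|\gamma\mu|_x
= \log|\gamma|_w + \log|\mu|_w - |W_v(l/k)|^{-1}\sum_{x\mid v}\log|\mu|_x
= \log|\gamma|_w - z_w .
\end{equation*}
Summing the absolute values over all $w\mid v$ and then over all archimedean $v$ of $k$ reproduces exactly $\sum_{w\mid\infty}\bigl|\log|\gamma|_w - z_w\bigr|$, and the bound from Lemma \ref{lemunit1} gives (\ref{unit145}).

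I expect the only subtle point — really the crux — to be the verification that $|\gamma|_v=1$ at archimedean $v$ forces the place-by-place cancellation $\sum_{x\mid v}\log|\gamma|_x=0$; this uses the normalization (\ref{unit30}) together with $\|\Norm_{l/k}(\gamma)\|_v = \prod_{x\mid v}\|\gamma\|_x$ (the local norm factorization), and the fact that a root of unity has every absolute value equal to $1$. Everything else is bookkeeping with the definitions. One should be slightly careful that $\bz$ depends only on $\mu$ (not on any chosen extension field), but since the sums run over the fixed finite place sets $W_v(l/k)$ this is automatic. I would present the argument in exactly this order: define $\bz$, check $\bz\in\D_{r(l/k)}$, invoke Lemma \ref{lemunit1}, establish the norm-one cancellation for $\gamma$, and rewrite the left side of (\ref{unit145}).
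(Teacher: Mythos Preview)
Your proposal is correct and follows essentially the same route as the paper: define $z_w = |W_v(l/k)|^{-1}\sum_{x\mid v}\log|\mu|_x - \log|\mu|_w$, check $\bz\in\D_{r(l/k)}$, apply Lemma~\ref{lemunit1}, and then use $\sum_{x\mid v}\log|\gamma|_x = 0$ to rewrite $\bigl|\log|\gamma|_w - z_w\bigr|$ as the summand in (\ref{unit145}). One small wording issue: writing ``$|\gamma|_v = 1$'' is imprecise since $\gamma\in l$ need not lie in $k$; what you want (and what your subsequent sentence makes clear) is $|\Norm_{l/k}(\gamma)|_v = 1$, which via the local norm factorization yields $\sum_{x\mid v}\log|\gamma|_x = 0$---the paper obtains this same vanishing by noting that the log-vector of $\gamma$ lies in $\D_{r(l/k)}$ by construction of the matrix $M$.
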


\begin{proof}  Let $\bz = (z_w)$ be the vector in $\R^{r(l) + 1}$ defined at each place $w$ in $W_v(l/k)$ by
\begin{equation}\label{unit150}
z_w = |W_v(l/k)|^{-1} \sum_{x|v} \log |\mu|_x - \log |\mu|_w.
\end{equation}
It follows that at each place $v$ in $W_{\infty}(k/\Q)$ we have
\begin{equation*}\label{unit155}
\begin{split}
\sum_{w | v} z_w &= \sum_{w | v} \biggl( |W_v(l/k)|^{-1} \sum_{x|v} \log |\mu|_x - \log |\mu|_w\biggr)\\
	&= \sum_{x | v} \log |\mu|_x - \sum_{w | v} \log |\mu|_w\\
	&= 0. 
\end{split}
\end{equation*}
Therefore $\bz = (z_w)$ belongs to the subspace $\D_{r(l/k)}$.  By Lemma \ref{lemunit1} there exists an 
elements $\gamma$ in $\ee$ such that
\begin{equation}\label{unit165}
\sum_{w | \infty} \bigl|\log |\gamma|_w - z_w\bigr| \le \sum_{j = 1}^{r(l/k)} h(\ep_j).
\end{equation}
If $w | v$ then using (\ref{unit43}) and (\ref{unit150}), we find that
\begin{equation}\label{unit170}
\begin{split}
\bigl|\log |\gamma|_w - z_w\bigr| &= \Bigl|\log |\gamma \mu|_w - |W_v(l/k)|^{-1} \sum_{x | v} \log |\mu|_x\Bigr|\\
						  &= \Bigl|\log |\gamma \mu|_w - |W_v(l/k)|^{-1} \sum_{x | v} \log |\gamma \mu|_x\Bigr|.
\end{split}
\end{equation}
The inequality (\ref{unit145}) follows by combining (\ref{unit165}) and (\ref{unit170}).
\end{proof}

\section{Proof of Theorem \ref{thmintro1} and Theorem \ref{thmintro2}}

We suppose that  the full $O_k$-module $\mM \subseteq O_l$ is defined as in (\ref{intro35}), 
and that the rank $r(l/k)$ of the group $\E_{l/k}(\mM)$ of relative units is positive.  Let
\begin{equation*}\label{unit160}
\ee = \big\langle \ep_1, \ep_2, \dots , \ep_{r(l/k)}\big\rangle \subseteq \E_{l/k}(\mM)
\end{equation*}
be the subgroup generated by the multiplicatively independent units (\ref{intro95}), and assume that 
$\beta \not= 0$ in $O_k$, and $\mu \not= 0$ in $\mM$, satisfy (\ref{intro100}).

Let $\gamma$ be a point in $\ee$ such that the inequality (\ref{unit145}) holds.  Then at each place $v$ of $k$ we have
\begin{equation}\label{unit200}
\begin{split}
[l : k] \sum_{w | v} \log |\gamma \mu|_w &= \log \bigl|\Norm_{l/k}(\gamma \mu)\bigr|_v\\
							     &= \log \bigl|\Norm_{l/k}(\mu)\bigr|_v\\
							     &= \log |\beta|_v.
\end{split}
\end{equation}
We also have
\begin{equation}\label{unit205}
2 [l : k] h(\gamma \mu) = [k : l] \sum_{w | \infty} \bigl|\log |\gamma \mu|_w\bigr| 
	+ [k : l] \sum_{w \nmid \infty} \bigl|\log |\gamma \mu|_w\bigr|.
\end{equation}
Using (\ref{unit145}) and (\ref{unit200}) we estimate the first sum on the right of (\ref{unit205}) by
\begin{equation}\label{unit210}
\begin{split}
[l : k] \sum_{w | \infty} \bigl|\log |\gamma \mu|_w\bigr|
         &\le [l : k] \sum_{v | \infty} \sum_{w | v} \Bigl|\log |\gamma \mu|_w - |W_v(l/k)|^{-1} \sum_{x | v} \log |\gamma \mu|_x\Bigr|\\
         &\qquad\qquad + [l : k] \sum_{v | \infty} \sum_{w | v} |W_v(l/k)|^{-1} \Bigl|\sum_{x | v} \log |\gamma \mu|_x\Bigr|\\
         &\le [l : k] \sum_{j = 1}^{r(l/k)} h(\ep_j) + \sum_{v | \infty} \bigl|\log |\Norm_{l/k}(\gamma \mu)|_v\bigr|\\
         &= [l : k] \sum_{j = 1}^{r(l/k)} h(\ep_j) + \sum_{v | \infty} \bigl|\log |\beta|_v\bigr|.
\end{split}
\end{equation}

As $\mu$ and $\gamma \mu$ belong to $O_l$, we get
\begin{equation*}\label{unit215}
\log |\gamma \mu|_w \le 0
\end{equation*}
at each finite place $w$ of $l$.  Hence the second sum on the right of (\ref{unit205}) is
\begin{equation}\label{unit220}
\begin{split}
[l : k] \sum_{w \nmid \infty} \bigl|\log |\gamma \mu|_w\bigr| &= - [l : k] \sum_{v \nmid \infty} \sum_{w | v} \log |\gamma \mu|_w\\
	&= - \sum_{v \nmid \infty} \log |\Norm_{l/k}(\gamma \mu)|_v\\
	&= \sum_{v \nmid \infty} \bigl|\log |\beta|_v\bigr|
\end{split}
\end{equation}
By combining (\ref{unit205}), (\ref{unit210}), and (\ref{unit220}), we find that
\begin{equation}\label{unit225}
\begin{split}
2 [l : k] h(\gamma \mu) &\le [l : k] \sum_{j = 1}^{r(l/k)} h(\ep_j) + \sum_v \bigl|\log |\beta|_v\bigr|\\
	                             &= [l : k] \sum_{j = 1}^{r(l/k)} h(\ep_j) + 2 h(\beta).
\end{split}
\end{equation}
The inequality (\ref{unit225}) is also (\ref{intro110}) in the statement of Theorem \ref{thmintro1}.

Next we prove Theorem \ref{thmintro2}, where we assume that the rank of $\E_{l/k}(\mM)$ is zero.  That is, we assume
that the rank $r(k)$ is equal to the rank $r(l)$.  In general we have $r(k) \le r(l)$, and we recall 
(see \cite[Proposition 3.20]{narkiewicz2010}) that $r(k) = r(l)$ if and only if $l$ is a $\CM$-field, and $k$ is the maximal totally 
real subfield of $l$.  Assume that $\beta \not= 0$ in $O_k$, and $\mu \not= 0$ in $\mM$, satisfy (\ref{intro130}).  As in
(\ref{unit200}) we have
\begin{equation}\label{unit250}
[l : k] \sum_{w | v} \log |\mu|_w = \log |\Norm_{l/k}(\mu)|_v = \log |\beta|_v
\end{equation}
at each place $v$ of $k$.  Because $l$ is a $\CM$-field and $k$ is the maximal totally real subfield of $l$, for each archimedean 
place $v$ of $k$ the set $W_v(l/k)$ contains exactly one place of $l$.  If for each archimedean place $v$ of $k$ we write
\begin{equation*}\label{unit255}
W_v(l/k) = \{w_v\},
\end{equation*}
then (\ref{unit250}) asserts that
\begin{equation}\label{unit260}
[l : k] \log |\mu|_{w_v} = \log |\beta|_v.
\end{equation}
In particular, at each archimedean place $v$ of $k$ we get
\begin{equation}\label{unit260}
[l : k] \log^+ |\mu|_{w_v} = \log^+ |\beta|_v.
\end{equation}
As $\beta \not= 0$ and $\mu \not= 0$ are algebraic integers, we have
\begin{equation}\label{unit265}
\log |\beta|_v \le 0,\quad\text{and}\quad \log |\mu|_w \le 0,
\end{equation}
at each nonarchimedean place $v$ of $k$, and each nonarchimedean place $w$ of $l$.  Now (\ref{unit260}) and (\ref{unit265})
imply that
\begin{equation}\label{unit270}
[l : k] h(\mu) = [l : k] \sum_{v | \infty}\log^+ |\mu|_{w_v} = \sum_{v | \infty} \log^+ |\beta|_v = h(\beta).
\end{equation}
This verifies the identity (\ref{intro135}).


\end{document}